\newtheorem{theorem}{Theorem}[section]
\newtheorem{corollary}[theorem]{Corollary}
\newtheorem{proposition}[theorem]{Proposition}
\newtheorem{lemma}[theorem]{Lemma}
\newtheorem*{theorem*}{Theorem}
\newtheorem*{corollary*}{Corollary}
\newtheorem*{lemma*}{Lemma}
\newtheorem*{proposition*}{Proposition}
\theoremstyle{definition}
\newtheorem{definition}[theorem]{Definition}
\newtheorem{remark}[theorem]{Remark}
\newtheorem{example}[theorem]{Example}
\newtheorem{question}[theorem]{Question}
\newtheorem{chunk}[theorem]{}
\DeclareMathOperator{\pdim}{pd}
\DeclareMathOperator{\tor}{Tor}
\DeclareMathOperator{\ext}{Ext}
\DeclareMathOperator{\depth}{depth}
\DeclareMathOperator{\Hom}{Hom}
\DeclareMathOperator{\idim}{id}
\DeclareMathOperator{\gdim}{Gdim}
\DeclareMathOperator{\coker}{coker}
\DeclareMathOperator{\width}{width}
\DeclareMathOperator{\cone}{cone}
\title{The Auslander Bound for Complexes}
\author{Andrew J. Soto Levins}
\date{}
\begin{document}
\maketitle
\begin{abstract}
If a module $M$ has finite projective dimension, then the Ext modules of $M$ against any other module eventually vanish and the projective dimension of $M$ gives a uniform bound for this vanishing. For modules of infinite projective dimension there can still exist a bound. Such a bound is called the Auslander bound. In this paper we define a similar bound for complexes and give applications.
\end{abstract}

%\begin{question}[Jorgensen] \label{jorgensenquestion}
%\ref{} recall theorems
%\cite[]{} bibliography
\section{Introduction}
In this paper we define and study an Auslander bound for complexes. To motivate the definition of Auslander bound for a module, let $R$ be a ring and let $M$ be an $R$-module. If $\pdim_{R}M<\infty$, then for all modules $N$ we have $\ext_{R}^{n}(M,N)=0$ for $n>\pdim_{R}M$, and so there is a bound on the vanishing of Ext that only depends on $M$. What can be said about the vanishing of Ext when $M$ does not necessarily have finite projective dimension?
\begin{definition} For finitely generated modules $R$-modules $M$ and $N$, set
\begin{enumerate}
\item $P_{R}(M,N)=\sup\{n|\ext_{R}^{n}(M,N)\neq 0\}$
\item $b_{M}^{R}=\sup\{P_{R}(M,N)|N\text{ is finitely generated and }P_{R}(M,N)<\infty\}$.
\end{enumerate}
The number $b_{M}^{R}$ is called the \textit{Auslander bound} of $M$.
\end{definition}
If finite, the number $P_{R}(M,N)$ is the index of the last nonzero Ext of $M$ against $N$. If $\pdim_{R}M<\infty$, then $b_{M}^{R}=\pdim_{R}M$ by \cite[Theorem 2.11.1]{wei}, and so the Auslander bound can be thought of as a generalization of projective dimension.\newline

We say $R$ has AC if $b_{M}^{R}<\infty$ for all finitely generated modules $M$, and $R$ is AB if $R$ is a Gorenstein local ring with AC. Having AC is a condition on the ring and on all modules and many theorems in the literature that use Auslander bounds assume that the ring has AC, but only make conclusions about an individual module or complex. In this paper we prove results by only assuming that a complex has a finite Auslander bound.\newline

In Section \ref{backgroundsection} we review the necessary background for the rest of the paper, and in Section \ref{anauslanderboundforcomplexessection} we prove a proposition that allows us to reduce results about complexes to the module case. In Section \ref{aresultofaraya} we show that the Auslander bound and the Gorenstein dimension of a complex are equal when both are finite, and in Section \ref{thetheoremsofchristensenandholmrevisited} we prove two theorems on the finiteness of homological dimensions for complexes with finite Auslander bound. Finally in Section \ref{deriveddepthformulasection} we prove a new case of the derived depth formula.\newline

I would like to thank Lars Christensen, Henrik Holm, David Jorgensen, Mark Walker, and a referee for their very helpful comments on an earlier draft that greatly improved this paper. I would also like to thank Jack Jeffries, Tom Marley, and Claudia Miller for many helpful conversations throughout this project.

\section{Background} \label{backgroundsection}
In this section $R$ is a Noetherian ring. All complexes are indexed homologically, so if $M$ is a complex, then $M$ has the form
\[M=\dots\rightarrow M_{i+1}\xrightarrow{\partial_{i+1}} M_{i}\xrightarrow{\partial_{i}} M_{i-1}\rightarrow\cdots.\]
The supremum and infimum of $M$ are 
\[\sup{M}=\sup\{i|M_{i}\neq 0\} \text{ and } \inf{M}=\inf\{i|M_{i}\neq 0\}.\] 
The cokernels and kernels of differentials of $M$ are denoted by $C_{i}(M)=\text{coker}{\partial_{i+1}^{M}}$ and $Z_{i}(M)=\text{ker}{\partial_{i}^{M}}$. A shift of $M$ is a complex $\Sigma^{n}M$ with $(\Sigma^{n}M)_{i}=M_{i-n}$ and differential $\partial_{i}^{\Sigma^{n}M}=(-1)^{n}\partial_{i-n}^{M}$. The truncations of $M$ are
\[M_{\leq n}=0\rightarrow M_{n}\rightarrow M_{n-1}\rightarrow\cdots,\quad\quad\quad M_{\geq n}=\cdots\rightarrow M_{n+1}\rightarrow M_{n}\rightarrow 0,\]
and
\[M_{\subset n}=0\rightarrow C_{n}(M)\rightarrow M_{n-1}\rightarrow M_{n-2}\rightarrow\cdots.\]
We denote the derived category of $R$ by $D(R)$. We let $D_{b}(R)$ denote the full subcategory of $D(R)$ consisting of all complexes isomorphic to a bounded complex in $D(R)$ and let $D_{b}^{f}(R)$ denote the subcategory of $D_{b}(R)$ whose complexes have degreewise finitely generated homology. Every homologically bounded complex is isomorphic in $D(R)$ to a bounded complex.\newline

Our main object of study in this paper is the Auslander bound of a complex which uses the Ext modules of two complexes, and semiprojective and semiinjective resolutions are needed to define Ext. These are the analogs of projective and injective resolutions for modules.
\begin{chunk} Let $M$ and $N$ be complexes.
\begin{enumerate}
\item A complex $P$ is \textit{semiprojective} if $P_{i}$ is projective for all $i$ and $\Hom_{R}(P,\_)$ preserves quasiisomorphisms, and $P$ is a \textit{semiprojective resolution} of $M$ if there exists a quasiisomorphism $\pi:P\rightarrow M$. The \textit{projective dimension} of $M$ is defined to be the infimum of $p\in\mathbb{Z}$ for which there exists a semiprojective resolution $P\rightarrow M$ with $P_{i}=0$ for all $i>p$.
\item If $M$ is in $D_{b}^{f}(R)$, then there exists a semiprojective resolution $P\rightarrow M$ with $P_{i}$ finitely generated for all $i$ and $P_{i}=0$ for all $i<\inf{H(M)}$ (see \cite[Theorem 5.2.16]{christensenfoxbyholm}). 
\item A complex $I$ is \textit{semiinjective} if $I_{i}$ is injective for all $i$ and $\Hom_{R}(\_,I)$ preserves quasiisomorphisms, and $I$ is a \textit{semiinjective resolution} of $N$ if there exists a quasiisomorphism $\iota:N\rightarrow I$. The \textit{injective dimension} of $N$ is defined to be the infimum of $-p\in\mathbb{Z}$ for which there exists a semiinjective resolution $N\rightarrow I$ with $I_{p}=0$ for all $i<p$.
\item There exists a semiinjective resolution $N\rightarrow I$ with $I_{i}=0$ for $i>\sup{H(N)}$ (see \cite[Theorem 5.3.26]{christensenfoxbyholm}).
\item Now
\[\ext_{R}^{n}(M,N)=H_{-n}(\mathbb{R}\Hom_{R}(M,N))\text{ and }\tor_{n}^{R}(M,N)=H_{n}(M\otimes_{R}^{\mathbb{L}}N)\]
where 
\[\Hom_{R}(P,N) \simeq \mathbb{R}\Hom_{R}(M,N) \simeq \Hom_{R}(M,I)\]
and
\[P\otimes_{R}N \simeq M\otimes_{R}^{\mathbb{L}}N \simeq M\otimes_{R}Q\]
with $Q\rightarrow N$ a semiprojective resolution.
\item If $R$ is a Noetherian local ring with residue field $k$, then the \textit{depth} of $M$ is
\[\depth{M}=-\sup{H(\mathbb{R}\Hom_{R}(k,M))}\]
and the \textit{width} of $M$ is
\[\width{M}=\inf{H(k\otimes_{R}^{\mathbb{L}}M)}.\]
\end{enumerate}
\end{chunk}

Tools we will use in this paper are Tate homology and Tate cohomology, which are defined for complexes of finite Gorenstein dimension.
\begin{chunk} Let $M$ and $N$ be complexes in $D_{b}^{f}(R)$.
\begin{enumerate}
\item An exact complex $T$ of projective modules is \textit{totally acyclic} if $\Hom_{R}(T,Q)$ is exact for all projective modules $Q$, and a \textit{complete projective resolution} of $M$ is a diagram
\[T\xrightarrow{\tau} P\xrightarrow{\pi} M\]
where $\pi$ is a semiprojective resolution, $T$ is totally acyclic, and $\tau_{i}$ is an isomorphism for $i\gg 0$.
\item The Gorenstein dimension of $M$, denoted by $\gdim_{R}M$, is the least integer $n$ so that there exists a complete projective resolution as above with $\tau_{i}$ an isomorphism for all $i\geq n$. It is well known that $\gdim_{R}M<\infty$ if and only if the canonical map
\[M\rightarrow \mathbb{R}\Hom_{R}(\mathbb{R}\Hom_{R}(M,R),R)\]
is a quasiisomorphism. When $\gdim_{R}M<\infty$, we have
\[\gdim_{R}M=-\inf{H(\mathbb{R}\Hom_{R}(M,R))}\]
(see \cite[Remark 5.4]{veliche}).
\item If $T\rightarrow P\rightarrow M$ is a complete projective resolution of $M$, then the Tate homology of $M$ against $N$ is
\[\widehat{\tor}_{n}^{R}(M,N)=H_{n}(T\otimes_{R}N)\]
and the Tate cohomology of $M$ against $N$ is
\[\widehat{\ext}_{R}^{n}(M,N)=H_{-i}(\Hom_{R}(T,N)).\]
\end{enumerate}
\end{chunk}

\section{The Auslander Bound for Complexes} \label{anauslanderboundforcomplexessection}
In this section we introduce the Auslander bound for complexes and prove Proposition \ref{lemmaforpmrbmrimpliesgdimforcomplexes} which allows us to reduce results about complexes to the module case. Our definition of the following is motivated by \cite[Theorem 8.1.8]{christensenfoxbyholm}.
\begin{definition} For $M$ and $N$ complexes with $\inf{H(N)}$ an integer, define
\[P_{R}(M,N)=\sup\{n\in\mathbb{Z}|\ext_{R}^{n-\inf{H(N)}}(M,N)\neq 0\}=\inf H({N})-\inf H({\mathbb{R}\Hom_{R}(M,N)}).\]
For $M\in D_{b}^{f}(R)$, the \textit{Auslander bound} of $M$ is
\[B_{M}^{R}=\sup\{P_{R}(M,N)|N\in D_{b}^{f}(R)\text{ and }P_{R}(M,N)<\infty\}.\]
\end{definition}

Note that $P_{R}(M,N)$ is invariant under shifting $N$.
\begin{example} Let $R$ be a ring and let $M\in D_{b}^{f}(R)$ be a complex. If $\pdim_{R}M<\infty$, then $B_{M}^{R}=\pdim_{R}M$ by \cite[Theorem 8.1.8]{christensenfoxbyholm}. If $M$ is a module, then $b_{M}^{R}=\pdim_{R}M$ by \cite[Theorem 2.11.1]{wei}, and so $b_{M}^{R}=B_{M}^{R}$.
\end{example}

Before proving Proposition \ref{lemmaforpmrbmrimpliesgdimforcomplexes}, we first need to prove a few lemmas. 
\begin{lemma} \label{prelemmaforpmrbmrimpliesgdimforcomplexes} Let $R$ be a Noetherian ring, let $M\in D_{b}^{f}(R)$ be a complex, and let $P\rightarrow M$ be a semiprojective resolution with $P_{i}$ finitely generated for all $i$. If $n\geq\sup{H(M)}$, then $B_{M}^{R}<\infty$ if and only if $B_{C_{n}(P)}^{R}<\infty$.
\end{lemma}

\begin{proof}
Let $N\in D_{b}^{f}(R)$ be a complex. We can assume $\inf{H(N)}=0$. Now consider the exact sequence
\[\star\quad\quad0\rightarrow P_{\leq n-1}\rightarrow P\rightarrow P_{\geq n}\rightarrow 0\]
when $n\geq\sup{H(M)}$. Since $P_{\geq n}\simeq \Sigma^{n}C_{n}(P)$, the following sequence is exact
\[0\rightarrow \mathbb{R}\Hom_{R}(\Sigma^{n}C_{n}(P),N)\rightarrow \mathbb{R}\Hom_{R}(M,N)\rightarrow \mathbb{R}\Hom_{R}(P_{\leq n-1},N)\rightarrow 0.\]
Since $\pdim_{R}P_{\leq n-1}\leq n-1$, we have $\ext_{R}^{i}(P_{\leq n-1},N)=0$ for $i>n-1$ by \cite[Theorem 8.1.8]{christensenfoxbyholm}, and so 
\[\ext_{R}^{i}(M,N)\cong \ext_{R}^{i}(\Sigma^{n}C_{n}(P),N)\cong \ext_{R}^{i-n}(C_{n}(P),N)\]
for all $i>n$. This isomorphism proves the lemma.
\end{proof}

\begin{lemma} \label{lemmaforbmrbmwhenmismod} Let $R$ be a ring and let $M$ and $N$ be bounded complexes. Assume $P_{R}(M_{i},N_{j})=0$ for all $i$ and all $j$. If $\pi:N\rightarrow I$ is a quasiisomorphism where $I$ is a complex of injectives that is bounded to the left, then 
\[\Hom_{R}(M,\pi):\Hom(M,N)\rightarrow\Hom_{R}(M,I)\]
is a quasiisomorphism. 
\end{lemma}

\begin{proof}
Since $M$ is bounded, by \cite[Proposition 2.6]{christensenfrankildholm} it is enough to show
\[\Hom_{R}(M_{i},\pi):\Hom_{R}(M_{i},N)\rightarrow \Hom_{R}(M_{i},I)\]
is a quasiisomorphism for each $i$. We do this by showing $\Hom_{R}(M_{i},\cone{\pi})$ is exact.\newline

Let $a=\sup{\cone{\pi}}$. Then the complex $\cone{\pi}$ is exact and has the form
\[0\rightarrow (\cone{\pi})_{a}\xrightarrow{\partial_{a}} (\cone{\pi})_{a-1}\xrightarrow{\partial_{a-1}} (\cone{\pi})_{a-2}\rightarrow\cdots\]  
with $(\cone{\pi})_{j}=I_{j}\oplus N_{j-1}$ and $P_{R}(M_{i},(\cone{\pi})_{j})=0$ since $I_{j}$ is injective and $P_{R}(M_{i},N_{j})=0$ for all $i$ and all $j$. Since 
\[0\rightarrow (\cone{\pi})_{a}\rightarrow (\cone{\pi})_{a-1}\rightarrow \text{im}{\partial_{a-1}}\rightarrow 0\]
is exact and since $P_{R}(M_{i},(\cone{\pi})_{a})=P_{R}(M_{i},(\cone{\pi})_{a-1})=0$, we have $P_{R}(M_{i},\text{im}{\partial_{a-1}})=0$, and so
\[0\rightarrow \Hom_{R}(M_{i},(\cone{\pi})_{a})\rightarrow \Hom_{R}(M_{i},(\cone{\pi})_{a-1})\rightarrow \Hom_{R}(M_{i},\text{im}{\partial_{a-1}})\rightarrow 0\]
is exact. Also,
\[0\rightarrow \text{im}{\partial_{a-1}}\rightarrow (\cone{\pi})_{a-2}\rightarrow \text{im}{\partial_{a-2}}\rightarrow 0\]
is exact and $P_{R}(M_{i},(\cone{\pi})_{a-2})=P_{R}(M_{i},\text{im}{\partial_{a-1}})=0$, and so
\[0\rightarrow \Hom_{R}(M_{i},\text{im}{\partial_{a-1}})\rightarrow \Hom_{R}(M_{i},(\cone{\pi})_{a-2})\rightarrow \Hom_{R}(M_{i},\text{im}{\partial_{a-2}})\rightarrow 0\]
is exact and $P_{R}(M_{i},\text{im}{\partial_{a-2}})=0$. Therefore
\begin{align*}
0\rightarrow \Hom_{R}(M_{i},(\cone{\pi})_{a}) &\rightarrow \Hom_{R}(M_{i},(\cone{\pi})_{a-1}) \\
&\rightarrow \Hom_{R}(M_{i},(\cone{\pi})_{a-2})\rightarrow \Hom_{R}(M_{i},\text{im}{\partial_{a-2}})\rightarrow 0
\end{align*}
is exact. Continuing in this way finishes the proof.
\end{proof}

\begin{lemma} \label{bmrbmwhenmismod} Let $R$ be a Noetherian ring and let $M$ be a finitely generated module with $P_{R}(M,R)<\infty$. Then $b_{M}^{R}<\infty$ if and only if $B_{M}^{R}<\infty$.
\end{lemma}

\begin{proof}
Since $b_{M}^{R}\leq B_{M}^{R}$ is always true, we just need to show $b_{M}^{R}<\infty$ implies $B_{M}^{R}<\infty$. Let $L=\Omega^{P_{R}(M,R)}(M)$ so that $P_{R}(L,R)=0$. Since $b_{M}^{R}<\infty$ if and only if $b_{L}^{R}<\infty$, and since $B_{M}^{R}<\infty$ if and only if $B_{L}^{R}<\infty$ by Lemma \ref{prelemmaforpmrbmrimpliesgdimforcomplexes}, it is enough to show $b_{L}^{R}<\infty$ implies $B_{L}^{R}<\infty$.\newline

Let $N\in D_{f}^{b}(R)$ be a complex with semiprojective resolution $Q\rightarrow N$ where $\inf{Q}=\inf{H(N)}$ and $Q_{i}$ is finitely generated for all $i$. Let $n\geq\sup{H(N)}$. We can assume $\inf{H(N)}=0$. Now consider the exact sequence
\[\star\quad\quad 0\rightarrow Q_{\leq n-1}\rightarrow Q\rightarrow Q_{\geq n}\rightarrow 0\]
and let $Q_{\leq n-1}\rightarrow I$ be a semiinjective resolution with $I$ bounded to the left. By Lemma \ref{lemmaforbmrbmwhenmismod} $\Hom_{R}(L,Q_{\leq n-1})$ and $\Hom_{R}(L,I)$ are quasiisomorphic, and so
\[\ext_{R}^{i}(L,Q_{\leq n-1})\cong H_{-i}(\Hom_{R}(L,I))\cong H_{-i}(\Hom_{R}(L,Q_{\leq n-1}))=0\]
for $i>0$. Since $\star$ is exact and since $Q_{\geq n}\simeq\Sigma^{n}C_{n}(Q)$, 
\[0\rightarrow \mathbb{R}\Hom_{R}(L,Q_{\leq n-1})\rightarrow \mathbb{R}\Hom_{R}(L,N)\rightarrow \mathbb{R}\Hom_{R}(L,\Sigma^{n}C_{n}(Q))\rightarrow 0\]
is exact, and so
\[\ext_{R}^{i}(L,N)\cong\ext_{R}^{i}(L,\Sigma^{n}C_{n}(Q))\cong\ext_{R}^{i+n}(L,C_{n}(Q))\]
for $i>0$, proving $b_{L}^{R}<\infty$ implies $B_{L}^{R}<\infty$.
\end{proof}

\begin{proposition} \label{lemmaforpmrbmrimpliesgdimforcomplexes} Let $R$ be a Noetherian ring, let $M\in D_{f}^{b}(R)$ be a complex with $P_{R}(M,R)<\infty$, and let $P\rightarrow M$ be a semi projective resolution with $\inf{P}=\inf{H(M)}$ and $P_{i}$ finitely generated for all $i$. If $n\geq\sup{H(M)}$, then $B_{M}^{R}<\infty$ if and only if $b_{C}^{R}<\infty$, where $C=C_{n}(P)$. In particular, $B_{M}^{R}<\infty$ if $R$ has AC. 
\end{proposition}

\begin{proof}
The lemma is an immediate consequence of Lemma \ref{prelemmaforpmrbmrimpliesgdimforcomplexes} and Lemma \ref{bmrbmwhenmismod}.
\end{proof}

\section{Gorenstein dimension and a Result of Araya} \label{aresultofaraya}
In this section we prove Theorem \ref{auslanderboundandgorensteindimensionareequal}, which shows that the Auslander bound and the Gorenstein dimension of a complex are equal when both are finite, and Theorem \ref{firstauslanderbuchsbaumformula}, which generalizes a result of Araya. The idea for the following proof came from \cite[Lemma 2.10]{christensenjorgensen2}.
\begin{lemma} \label{dimensionshiftinglemma} Let $R$ be a Noetherian ring and let $M$ and $0\not\simeq N$ be complexes in $D_{b}^{f}(R)$ with $\gdim_{R}M<\infty$ and $N$ bounded. If $L\rightarrow N$ is a semiprojective resolution, then for each $n>\sup{N}$ we have
\[\widehat{\ext}_{R}^{i}(M,N)\cong\widehat{\ext}_{R}^{i+n}(M,C_{n}(L))\]
for all $i\in\mathbb{Z}$.
\end{lemma}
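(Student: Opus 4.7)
My approach is to use a soft truncation of the semiprojective resolution $L$ to produce a short exact sequence of complexes whose third term gives a shifted copy of the module $C_n(L)$, and then apply the Tate cohomology functor $\widehat{\ext}^*_R(M,-)$, exploiting the vanishing of Tate cohomology with projective coefficients for complexes of finite Gorenstein projective dimension.

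First I would observe that the assumption $n>\sup N\ge\sup H(L)$ implies $H_i(L)=0$ for all $i\ge n$, and in particular $\ker\partial_n=\operatorname{im}\partial_{n+1}$. A direct homology calculation then shows that $L_{\subset n}$ (the soft truncation at degree $n$) is quasi-isomorphic to $L$, and hence to $N$. So the lemma reduces to producing an isomorphism
$$\widehat{\ext}_R^{\,i}(M,L_{\subset n})\;\cong\;\widehat{\ext}_R^{\,i+n}(M,C_n(L)).$$

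Next, I would consider the degreewise split short exact sequence of complexes
$$0\longrightarrow L_{\le n-1}\longrightarrow L_{\subset n}\longrightarrow \Sigma^{n}C_n(L)\longrightarrow 0,$$
where $\Sigma^{n}C_n(L)$ denotes the module $C_n(L)$ placed in homological degree $n$ with zero differential. Applying $\widehat{\mathbb R\Hom}_R(M,-)$ yields a long exact sequence in Tate cohomology. The essential point is that $\widehat{\ext}^{\,*}_R(M,L_{\le n-1})=0$: for a single projective $P$ the vanishing $\widehat{\ext}^{\,*}_R(M,P)=0$ is standard whenever $\gpd_R M<\infty$, and this extends to any bounded complex of projectives by induction on the number of nonzero components, splitting off the lowest nonzero term at each stage and using the vanishing on the two ends of the resulting short exact sequence. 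Hence the long exact sequence collapses to the required isomorphism, together with the suspension identity $\widehat{\ext}^{\,i}_R(M,\Sigma^{n}C_n(L))\cong\widehat{\ext}^{\,i+n}_R(M,C_n(L))$ coming from the indexing convention.

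The main obstacle I anticipate is justifying the vanishing $\widehat{\ext}^{\,*}_R(M,L_{\le n-1})=0$ cleanly, since Tate cohomology is defined through complete resolutions and its behavior on bounded complexes of projectives (as opposed to single projective modules) needs to be verified. This is precisely where the hypothesis $\gpd_R M<\infty$ is essential, via the existence of a complete projective resolution of $M$ together with the fact that $\Hom_R(-,P)$ is exact on such resolutions for any projective $P$. Once the vanishing is in place, the rest of the argument is purely formal, and the shift of $n$ in the Ext index tracks exactly the homological degree at which $C_n(L)$ sits in $L_{\subset n}$.
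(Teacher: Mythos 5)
Your second step --- the degreewise split sequence $0\to L_{\le n-1}\to L_{\subset n}\to\Sigma^{n}C_{n}(L)\to 0$ together with the vanishing $\widehat{\ext}_{R}^{\,*}(M,L_{\le n-1})=0$ --- is essentially the second half of the paper's proof (the paper obtains the vanishing by noting that $L_{\le n-1}$ is a bounded semiprojective complex, hence of finite projective dimension, and citing \cite[Theorem 4.5]{veliche}; your induction splitting off one projective component at a time is an acceptable substitute, though it silently requires $L_{\le n-1}$ to have only finitely many nonzero terms, i.e.\ the resolution must be taken bounded below with $\inf{L}=\inf{H(N)}$, as the paper does).

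The genuine gap is in your first step. You pass from $N$ to the quasi-isomorphic bounded complex $L_{\subset n}$ and declare that the lemma ``reduces to'' an isomorphism involving $L_{\subset n}$; that is, you use $\widehat{\ext}_{R}^{\,i}(M,N)\cong\widehat{\ext}_{R}^{\,i}(M,L_{\subset n})$ as if Tate cohomology were invariant under quasi-isomorphism in the second argument. It is not: $\widehat{\ext}_{R}^{\,i}(M,-)$ is computed by applying $\Hom_{R}(T,-)$ to a complete projective resolution $T$ of $M$, and $T$ is not semiprojective, so this functor does not preserve quasi-isomorphisms --- the example immediately following the lemma in the paper (a semiinjective resolution $N\to I$ has $\widehat{\ext}_{R}^{\,i}(M,I)=0$ for all $i$ while $\widehat{\ext}_{R}^{\,i}(M,N)\neq 0$) shows the invariance genuinely fails for unbounded targets. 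The replacement of $N$ by $L_{\subset n}$ is therefore not formal; it is exactly where the hypothesis that $N$ is bounded enters, and it is the half of the argument on which the paper spends most of its effort: the cone of $\widetilde{\pi}\colon L_{\subset n}\to N$ is a bounded exact complex, so $\Hom_{R}(T,\cone{\widetilde{\pi}})$ is exact by \cite[Lemma 2.4]{christensenfrankildholm}, and then \cite[Proposition 4.6]{veliche} applied to $0\to N\to\cone{\widetilde{\pi}}\to\Sigma L_{\subset n}\to 0$ yields the isomorphism. Your proposal identifies the vanishing on $L_{\le n-1}$ as the main obstacle, but that is the easier step; without an argument for invariance under this bounded quasi-isomorphism, the proof is incomplete, although it can be repaired exactly as above since both $N$ and $L_{\subset n}$ are bounded.
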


\begin{proof}
Let $T\rightarrow P\rightarrow M$ be a complete projective resolution of $M$ and let $L\rightarrow N$ be a semiprojective resolution of $N$ with $\inf{L}=\inf{H(N)}$ so that $L$ is bounded below. Now choose $n>\sup{N}$ so that there is a quasiisomorphism $\widetilde{\pi}:L_{\subset n}\rightarrow N$. Since $\widetilde{\pi}$ is a quasiisomorphism, $\cone{\widetilde{\pi}}$ is exact. Since $\cone{\widetilde{\pi}}$ is also bounded, $\Hom_{R}(T,\cone\widetilde{\pi})$ is exact by \cite[Lemma 2.4]{christensenfrankildholm}, and so the exact sequence
\[0\rightarrow N\rightarrow \cone\widetilde{\pi}\rightarrow \Sigma L_{\subset n}\rightarrow 0\]
implies
\[\widehat{\ext}_{R}^{i}(M,N)\cong \widehat{\ext}_{R}^{i}(M, L_{\subset n})\]
for all $i\in\mathbb{Z}$ by \cite[Proposition 4.6]{veliche}. Now consider the exact sequence
\[\star\quad\quad\quad 0\rightarrow L_{\leq n-1}\rightarrow L_{\subset n}\rightarrow \Sigma^{n}C_{n}(L)\rightarrow 0.\]
Since $L_{\leq n-1}$ is bounded, we have $\widehat{\ext}_{R}^{i}(\_,L_{\leq n-1})=0$ for all $i\in\mathbb{Z}$ by \cite[Theorem 4.5]{veliche}. Therefore $\star$ and \cite[Proposition 4.6]{veliche} gives the second isomorphism below
\[\widehat{\ext}_{R}^{i}(M,N) \cong \widehat{\ext}_{R}^{i}(M,L_{\subset n}) \cong \widehat{\ext}_{R}^{i}(M,\Sigma^{n}C_{n}(L)) \cong \widehat{\ext}_{R}^{i+n}(M,C_{n}(L)).\qedhere\]
\end{proof}

\begin{theorem} \label{auslanderboundandgorensteindimensionareequal} Let $R$ be a Noetherian ring and let $M\in D_{b}^{f}(R)$ be a complex with $B_{M}^{R}<\infty$ and $\gdim_{R}M<\infty$. Then
\[B_{M}^{R} = \gdim_{R}M.\]
\end{theorem}

\begin{proof}
Since 
\[\gdim_{R}M = P_{R}(M,R) \leq B_{M}^{R}\]
is always true, we just need to show $B_{M}^{R} \leq \gdim_{R}M$. Let $N\in D_{b}^{f}(R)$ be a complex with $P_{R}(M,N)<\infty$. We can assume $N$ is bounded and $\inf{N}=0$. Suppose $P_{R}(M,N)>\gdim_{R}M$. If $n>\sup{N}$, Lemma \ref{dimensionshiftinglemma} gives the second isomorphism below
\[\ext_{R}^{i}(M,N) \cong \widehat{\ext}_{R}^{i}(M,N) \cong \widehat{\ext}_{R}^{i+n}(M,C_{n}(L)) \cong \ext_{R}^{i+n}(M,C_{n}(L))\]
The first and third isomorphisms are for $i>\gdim_{R}M$ by \cite[4.1.1]{christensenjorgensen2}, and so $P_{R}(M,C_{n}(L))=P_{R}(M,N)+n$ for $n>\sup{N}$, contradicting the fact that $B_{M}^{R}<\infty$.
\end{proof}

\begin{corollary} \label{auslanderboundandgorensteindimensionareequalcorollary} Let $R$ be a Noetherian ring and let $M$ be a finitely generated module with $B_{M}^{R}<\infty$ and $\gdim_{R}M<\infty$. Then
\[B_{M}^{R} = \gdim_{R}M = b_{M}^{R}\]
\end{corollary}

\begin{proof}
Just note
\[b_{M}^{R} \leq B_{M}^{R} = \gdim_{R}M = P_{R}(M,R) \leq b_{M}^{R},\]
where the first equality is by Theorem \ref{auslanderboundandgorensteindimensionareequal}.
\end{proof}

Christensen and Jorgensen proved the next result for modules over AB rings in \cite[Proposition 3.2]{christensenjorgensen}.
\begin{lemma} \label{prmnfiniteimpliesvanishingtatecoho2} Let $R$ be a Noetherian ring and let $M$ and $0\not\simeq N$ be complexes in $D_{b}^{f}(R)$ with $B_{M}^{R}<\infty$, $\gdim_{R}M<\infty$, and $N$ bounded. If $P_{R}(M,N)<\infty$, then $\widehat{\ext}_{R}^{i}(M,N)=0$ for all $i\in\mathbb{Z}$.
\end{lemma}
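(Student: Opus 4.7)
The plan is to reduce the vanishing of stable Ext on the pair $(M,N)$ to the vanishing of ordinary Ext on a pair $(M, C_n(L))$ where $C_n(L)$ is a finitely generated module, then exploit $B_f(M)<\infty$. After shifting $N$ (which does not change $P_R(M,N)$ or $\widehat{\ext}_R^\bullet(M,N)$) I may assume $\inf N=0$, and since $N$ is already assumed bounded the dimension-shifting Lemma \ref{dimensionshiftinglemma} applies: taking a semiprojective resolution $L\to N$ with each $L_i$ finitely generated (possible by \cite[1.3.4]{veliche}), for every $n>\sup N$ one has
$$\widehat{\ext}_R^{\,i}(M,N)\;\cong\;\widehat{\ext}_R^{\,i+n}(M,C_n(L)) \qquad (i\in\mathbb{Z}),$$
and each $C_n(L)$ is a finitely generated module.

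The next step is to show $P_R(M,C_n(L))<\infty$ so that $B_f(M)$ can be invoked. Since $\gdim_R M<\infty$, the stable Ext agrees with ordinary Ext in degrees above $\gdim_R M$ (by \cite[4.1.1]{christensenjorgensen2}, the same reference used in Lemma \ref{prmnisbounded2}). Combining this with the isomorphism above in both directions, for $j>n+\gdim_R M$ one gets
$$\ext_R^{\,j}(M,C_n(L))\;\cong\;\widehat{\ext}_R^{\,j}(M,C_n(L))\;\cong\;\widehat{\ext}_R^{\,j-n}(M,N)\;\cong\;\ext_R^{\,j-n}(M,N),$$
which is zero once $j-n>P_R(M,N)$. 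Thus $P_R(M,C_n(L))$ is finite, and by the definition of $B_f(M)$ it is bounded by $B_f(M)$, independent of $n$.

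Finally, fix an arbitrary $i\in\mathbb{Z}$ and choose $n$ large enough that $n>\sup N$ and $i+n>\max\{\gdim_R M,\,B_f(M)\}$. Then the shifting isomorphism gives
$$\widehat{\ext}_R^{\,i}(M,N)\;\cong\;\widehat{\ext}_R^{\,i+n}(M,C_n(L))\;\cong\;\ext_R^{\,i+n}(M,C_n(L))\;=\;0,$$
where the second identification uses $i+n>\gdim_R M$ and the vanishing uses $i+n>B_f(M)\geq P_R(M,C_n(L))$. Since $i$ was arbitrary, $\widehat{\ext}_R^{\,i}(M,N)=0$ for all $i\in\mathbb{Z}$.

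The only delicate point is the bookkeeping: one must verify the hypotheses of Lemma \ref{dimensionshiftinglemma} (namely boundedness of $N$ and the semiprojective resolution being bounded below) and keep track of the two different thresholds — $\gdim_R M$, above which stable and ordinary Ext agree, and $B_f(M)$, above which ordinary Ext of $M$ against a finitely generated module with finite $P_R$ vanishes. Because both thresholds are finite and we have the freedom to take $n$ as large as we like, every individual degree $i$ is covered, even though no single $n$ suffices uniformly. This is the essential reason the argument works without any additional structural hypothesis beyond $\gdim_R M<\infty$ and $B_f(M)<\infty$.
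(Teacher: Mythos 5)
Your proof is correct, but it runs in the opposite direction from the paper's: you dimension-shift the \emph{second} argument, while the paper shifts the \emph{first}. Concretely, the paper takes a complete projective resolution $T\rightarrow P\rightarrow M$, uses \cite[Lemma 4.3]{christensenjorgensen2} to identify $\widehat{\ext}_{R}^{i}(M,N)$ with $\widehat{\ext}_{R}^{i-n}(C_{n}(T),N)$, transfers finiteness of the Auslander bound from $M$ to the Gorenstein-projective cosyzygies $C_{n}(T)$, and then invokes Lemma \ref{prmnisbounded2} to force $P_{R}(C_{n}(T),N)\leq\gdim_{R}C_{n}(T)=0$, reading off $\widehat{\ext}_{R}^{i}(M,N)\cong\ext_{R}^{1}(C_{i-1}(T),N)=0$ degree by degree. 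You instead reuse Lemma \ref{dimensionshiftinglemma} (shifting along a semiprojective resolution $L\rightarrow N$ with finitely generated terms), reduce to the finitely generated modules $C_{n}(L)$, verify $P_{R}(M,C_{n}(L))<\infty$ via the comparison of stable and ordinary Ext in degrees above $\gdim_{R}M$, and then apply the definition of $B_{f}(M)$ directly as a bound uniform in $n$; letting $n$ grow covers every degree $i$. Your route is somewhat more self-contained: it needs only the paper's own Lemma \ref{dimensionshiftinglemma} together with \cite[4.1.1]{christensenjorgensen2}, and it bypasses both Lemma \ref{prmnisbounded2} and the imported \cite[Lemma 4.3]{christensenjorgensen2}, as well as the intermediate step that $B_{f}(C_{n}(T))$ is finite; the paper's route instead keeps the bound on the syzygy side of $M$, which is the shape it needs elsewhere. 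Two small housekeeping points you leave implicit, neither of which affects correctness: invoking Lemma \ref{dimensionshiftinglemma} requires $\gpd_{R}M<\infty$, which holds since $\gdim_{R}M=\gpd_{R}M$ for $M\in D_{f}^{b}(R)$ by \cite[Remark 5.4]{veliche}, and in the degenerate case $C_{n}(L)=0$ the conclusion is immediate from the shifting isomorphism, so the restriction $0\not\simeq N$ in the definition of $B_{f}(M)$ causes no trouble.
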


\begin{proof}
We can assume $\inf{N}=0$. Let $T\rightarrow P\rightarrow M$ be a complete projective resolution. Then $\gdim_{R}C_{n}(T)=0$, and $B_{C_{n}(T)}^{R}<\infty$ by Lemma \ref{prelemmaforpmrbmrimpliesgdimforcomplexes}. Now fix $i\in\mathbb{Z}$ and let $n=i-1$. Then 
\[\widehat{\ext}_{R}^{i}(M,N)\cong\widehat{\ext}_{R}^{i-n}(C_{n}(T),N)\cong\widehat{\ext}_{R}^{1}(C_{n}(T),N)\cong\ext_{R}^{1}(C_{n}(T),N)=0.\]
The first isomorphism is by \cite[Lemma 4.3]{christensenjorgensen2}, for the third isomorphism see \cite[4.1.1]{christensenjorgensen2}, and the equality is by Theorem \ref{auslanderboundandgorensteindimensionareequal}. This finishes the proof.
\end{proof}

In \cite[Theorem 1.2.2.1 and Lemma 2.5]{araya} Araya proved the module version of the next result, it was first proved for modules over AB rings in \cite[Theorem 3.6]{christensenjorgensen}.
\begin{theorem} \label{firstauslanderbuchsbaumformula} Let $R$ be a Noetherian local ring and let $M$ and $0\not\simeq N$ be complexes in $D_{b}^{f}(R)$ with $B_{M}^{R}<\infty$ and $\gdim_{R}M<\infty$. If $P_{R}(M,N)<\infty$, then
\[P_{R}(M,N)=\depth{R}-\depth{M}.\]
\end{theorem}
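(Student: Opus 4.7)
The plan is to assemble the theorem from four ingredients: the two Tate/Ext lemmas just proved, the Auslander--Bridger formula for Gorenstein dimension of complexes, and the derived depth formula of Christensen--Jorgensen (the complex analogue of Araya's Lemma 2.5).

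First I would record the trivial inequalities. Since $0\not\simeq N\in D_{f}^{b}(R)$ and $P_{R}(M,N)<\infty$, the very definition of $B_{f}(M)$ gives $P_{R}(M,N)\le B_{f}(M)$. Lemma \ref{prmnisbounded2} then yields $B_{f}(M)\le \gdim_{R}M$, so altogether
$$P_{R}(M,N)\ \le\ B_{f}(M)\ \le\ \gdim_{R}M.$$
Thus everything will follow once I prove $P_{R}(M,N)\ge \gdim_{R}M$ and $\gdim_{R}M=\depth R-\depth_{R}M$.

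Second, the identity $\gdim_{R}M=\depth R-\depth_{R}M$ is the Auslander--Bridger/Auslander--Buchsbaum formula for Gorenstein dimension of a complex, which holds for any $M\in D_{f}^{b}(R)$ with $\gdim_{R}M<\infty$ (this is, e.g., Theorem 2.3.13 of Christensen's book, where $\depth_{R}M$ denotes the derived depth). I would simply invoke it.

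Third, and this is where the work sits, I want to show $P_{R}(M,N)=\depth R-\depth_{R}M$. By hypothesis $P_{R}(M,N)<\infty$ and $\gdim_{R}M<\infty$ and $N$ is quasiisomorphic to a bounded complex, so Lemma \ref{prmnfiniteimpliesvanishingtatecoho2} gives the total Tate vanishing $\widehat{\ext}_{R}^{i}(M,N)=0$ for all $i\in\mathbb{Z}$. This is exactly the hypothesis needed to apply the derived depth formula, which in the form used already in Proposition \ref{nismcm} (i.e.\ \cite[Theorem 6.1]{christensenjorgensen}) says
$$-\inf H(\mathbb{R}\Hom_{R}(M,N))\ =\ \depth R-\depth_{R}M-\inf H(N),$$
or equivalently $P_{R}(M,N)=\inf H(N)-\inf H(\mathbb{R}\Hom_{R}(M,N))=\depth R-\depth_{R}M$. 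Combining this with step two gives $P_{R}(M,N)=\gdim_{R}M$, which together with the chain of inequalities in step one forces $P_{R}(M,N)=B_{f}(M)=\gdim_{R}M=\depth R-\depth_{R}M$.

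The main obstacle is the third step: applying the Christensen--Jorgensen derived depth formula to an arbitrary $N\in D_{f}^{b}(R)$ rather than a module. If a direct citation in this generality is unavailable, I would reduce to the module case by the dimension-shifting technique used in Lemma \ref{dimensionshiftinglemma}: replace $N$ by a high syzygy cokernel $C_{n}(L)$ in a semiprojective resolution, where Tate and ordinary Ext agree in the relevant range and $C_{n}(L)$ is a genuine finitely generated module, then invoke Araya's module-level formula and track the shift $n$.
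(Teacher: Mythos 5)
Your outline coincides with the paper's proof in its skeleton: the chain $P_{R}(M,N)\le B_{f}(M)\le\gdim_{R}M$ (the second inequality via Lemma \ref{prmnisbounded2}), the Auslander--Bridger formula $\gdim_{R}M=\depth{R}-\depth{M}$ from \cite[Theorem 2.3.13]{christensen}, and the use of Lemma \ref{prmnfiniteimpliesvanishingtatecoho2} to get $\widehat{\ext}_{R}^{i}(M,N)=0$ for all $i$ are exactly the ingredients the paper uses. The gap is in your third step, the lower bound $P_{R}(M,N)\ge\depth{R}-\depth{M}$. The result you cite, \cite[Theorem 6.1]{christensenjorgensen}, is invoked in this paper only at the module level (in Proposition \ref{nismcm}); the complex-level statement you need is what the paper assembles from \cite[Proposition 6.5]{christensenjorgensen} (a width formula under total vanishing of Tate cohomology) together with \cite[Lemma A.4.4]{christensen} and \cite[A.6.3.2]{christensen}, the latter two being needed to know that $\mathbb{R}\Hom_{R}(M,N)$ has bounded, degreewise finitely generated homology (this is where $P_{R}(M,N)<\infty$ enters) so that widths can be identified with infima of homology. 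So your primary route is the right idea but rests on a citation that, in the generality stated, does not cover complexes.

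Your proposed fallback does not repair this. First, Araya's formula requires both arguments to be finitely generated modules, and after replacing $N$ by $C_{n}(L)$ your $M$ is still a complex; reducing $M$ as well (to a cokernel of a complete resolution, which is totally reflexive and hence of depth equal to $\depth{R}$) pushes the critical degree to $0$, outside any range where ordinary Ext of the reduced pair compares with ordinary Ext of $(M,N)$. Second, and more fundamentally, ``tracking the shift $n$'' is the wrong bookkeeping: the comparison isomorphisms $\ext_{R}^{i}(M,N)\cong\ext_{R}^{i+n}(M,C_{n}(L))$ hold only for $i>\gdim_{R}M$, i.e.\ strictly above the degree $\depth{R}-\depth{M}=\gdim_{R}M$ where the nonvanishing must be produced, and the top nonvanishing degree does not shift under syzygies of the second argument (for instance, if $\pdim_{R}M=p<\infty$ then $P_{R}(M,\Omega^{n}N')=p=P_{R}(M,N')$ for every module $N'$, not $p+n$; indeed, if the theorem holds, $P_{R}(M,C_{n}(L))$ equals $\depth{R}-\depth{M}$ just as $P_{R}(M,N)$ does). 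This is precisely why the paper works with Tate cohomology, where the shifting of Lemma \ref{dimensionshiftinglemma} is valid in all degrees, and then imports the nonvanishing from the complex-level width formula of Christensen--Jorgensen rather than from a reduction to the module case. To complete your argument you would either cite the Christensen--Jorgensen formula in its complex form (as the paper does) or re-prove that width formula, which your sketch does not do.
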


\begin{proof}
We can assume $N$ is bounded. The first equality below is by \cite[Theorem 14.3.28]{christensenfoxbyholm}
\[\width{N}-\width{\mathbb{R}\Hom_{R}(M,N)} = \inf H({N})-\inf H({\mathbb{R}\Hom_{R}(M,N)}) = P_{R}(M,N).\]
Also, Lemma \ref{prmnfiniteimpliesvanishingtatecoho2} gives $\widehat{\ext}_{R}^{i}(M,N)=0$ for all $i\in\mathbb{Z}$, and so \cite[Proposition 6.5]{christensenjorgensen} gives the second equality below
\[P_{R}(M,N)=\width{N}-\width{\mathbb{R}\Hom_{R}(M,N)}=\depth{R}-\depth{M}.\qedhere\]
\end{proof}

The next example shows that $b_{M}^{R}$ and $B_{M}^{R}$ are different in general when $M$ is a module.
\begin{example} \label{bigandsmallauslanderboundexample} Let $R$ be a Noetherian local ring that is not CM and let $k$ be the residue field of $R$. Since $R$ is CM if and only if $R$ has a nonzero finitely generated module of finite injective dimension, we have $b_{k}^{R}=-\infty$ by definition. To compute $B_{k}^{R}$, note that if $M\in D_{b}^{f}(R)$ is a complex with $P_{R}(k,M)<\infty$, then $\idim_{R}M<\infty$ by \cite[Theorem 16.4.8]{christensenfoxbyholm}, and so $P_{R}(k,M)=\depth{R}$ by \cite[Corollary 16.4.11]{christensenfoxbyholm}. Now if $\underline{x}$ is a minimal generating set for the maximal ideal of $R$, if $H$ is the Koszul complex on $\underline{x}$, and if $E$ is the injective hull of $k$, then $\Hom_{R}(H,E)\in D_{b}^{f}(R)$ and $\idim_{R}\Hom_{R}(H,E)<\infty$, and so $B_{k}^{R}=\depth{R}$. Note that $b_{k}^{R}$ and $B_{k}^{R}$ are both finite.
\end{example}

Lemma \ref{bmrbmwhenmismod}, Corollary \ref{auslanderboundandgorensteindimensionareequalcorollary}, and Example \ref{bigandsmallauslanderboundexample} raise the following question.
\begin{question}
Let $R$ be a Noetherian ring and let $M$ be a finitely generated module. Do we always have that $b_{M}^{R}$ and $B_{M}^{R}$ are simultaneously finite? Under what conditions do we have $b_{M}^{R}=B_{M}^{R}$?
\end{question}

\section{Projective and Gorenstein Dimension} \label{thetheoremsofchristensenandholmrevisited}
In this section we prove Theorem \ref{projectivedimension} and Theorem \ref{gorensteindimension}, which extend two results of Christensen and Holm to complexes with finite Auslander bound. The results of this section were proved for AC rings in \cite{christensenholm2}.
\begin{lemma} \label{firstlemmaoflarsandhenrik} Let $R$ be a Noetherian ring, let $U$ be an exact complex, let $C$ be a finitely generated module with $b_{C}^{R}<\infty$, and assume the following
\begin{enumerate}
\item $U_{v}$ is finitely generated for all $v\gg 0$
\item $\ext_{R}^{n}(C,U_{v})=0$ for all $n\geq 1$ and all $v\in\mathbb{Z}$
\item there exists $w\in\mathbb{Z}$ so that $\ext_{R}^{n}(C,Z_{w}(U))=0$ for all $n\gg 0$.
\end{enumerate}
Then $\ext_{R}^{n}(C,Z_{v}(U))=0$ for all $n\geq 1$ and all $v\in\mathbb{Z}$. In particular, $\Hom_{R}(C,U)$ is exact.
\end{lemma}

\begin{proof}
The proof is exactly the same as the proof of \cite[Lemma 2.1]{christensenholm2}, as \cite[Lemma 2.1]{christensenholm2} only needs $C$ to have a finite Auslander bound.
\end{proof}

\begin{theorem} \label{projectivedimension} Let $R$ be a Noetherian ring and let $M\in D_{b}^{f}(R)$ be a complex. If $B_{M}^{R}<\infty$, $P_{R}(M,R)<\infty$, and $P_{R}(M,M)<\infty$, then $\pdim_{R}M<\infty$ and
\[\pdim_{R}M = -\inf{H(\mathbb{R}\Hom_{R}(M,R))}.\]
\end{theorem}

\begin{proof}
Let $L\rightarrow M$ be a semiprojective resolution and let
\[s=\max\{\sup{H(M)}, -\inf{H(\mathbb{R}\Hom_{R}(M,R))}\}.\]
By Proposition \ref{lemmaforpmrbmrimpliesgdimforcomplexes} $B_{M}^{R}<\infty$ implies $b_{C_{s}(L)}^{R}<\infty$. The proof is now exactly the same as the proof of \cite[Theorem 2.3]{christensenholm2}, just use Lemma \ref{firstlemmaoflarsandhenrik} instead of \cite[Lemma 2.1]{christensenholm2}.
\end{proof}

\begin{lemma} \label{secondlemmaoflarsandhenrik} Let $R$ be a Noetherian ring, let $T$ be a finitely generated module, let $M$ and $N$ be complexes in $D_{b}^{f}(R)$ with $P_{R}(M,R)<\infty$, and consider the tensor evaluation morphism
\[\omega_{MTN}:\mathbb{R}\Hom_{R}(M,T)\otimes_{R}^{\mathbb{L}}N\rightarrow \mathbb{R}\Hom_{R}(M,T\otimes_{R}^{\mathbb{L}}N).\]
If $B_{M}^{R}<\infty$ and the three complexes
\[\mathbb{R}\Hom_{R}(M,T),\text{ }T\otimes_{R}^{\mathbb{L}}N,\text{ and }\mathbb{R}\Hom_{R}(M,T\otimes_{R}^{\mathbb{L}}N)\]
are homologically bounded, then $\omega_{MTN}$ is an isomorphism.
\end{lemma}

\begin{proof}
Let $P\rightarrow M$ be a semiprojective resolution and let 
\[s=\max\{\sup{H(M)}, -\inf{H(\mathbb{R}\Hom_{R}(M,T))}\}.\]
By Proposition \ref{lemmaforpmrbmrimpliesgdimforcomplexes} $B_{M}^{R}<\infty$ implies $b_{C_{s}(P)}^{R}<\infty$. The proof is now exactly the same as \cite[Lemma 4.1]{christensenholm2}, as \cite[Lemma 4.1]{christensenholm2} only needs $C_{s}(P)$ to have a finite Auslander bound.
\end{proof}

\begin{theorem} \label{gorensteindimension} Let $R$ be a Noetherian local ring and let $M\in D_{b}^{f}(R)$ be a complex. If $B_{M}^{R}<\infty$, then
\[\gdim_{R}M = -\inf{H(\mathbb{R}\Hom_{R}(M,R))}.\]
\end{theorem}

\begin{proof}
The proof is exactly the same as the proof of \cite[Theorem 4.4]{christensenholm2}, just use Lemma \ref{secondlemmaoflarsandhenrik} instead of \cite[Lemma 4.1]{christensenholm2}
\end{proof}

\begin{remark} As stated \cite[Theorem 4.4]{christensenholm2} is incorrect. The proof of this result says that the AC condition localizes, which is false, see \cite[Theorem A]{nassehwagstafftakahashivandebogert}.
\end{remark}

\section{Derived Depth Formula} \label{deriveddepthformulasection}
Our definition of the following is motivated by \cite[Theorem 8.3.11]{christensenfoxbyholm}.
\begin{definition} Let $M$ and $N$ be a complexes and define
\[q_{R}(M,N)=\sup\{n\in\mathbb{Z}|\tor_{n+\sup{H(N)}}^{R}(M,N)\neq 0\}.\]
\end{definition}

Note that $q_{R}(M,N)$ is invariant under shifting $N$. Christensen and Jorgensen proved the next result for modules over AB rings in \cite[Proposition 3.2]{christensenjorgensen}.
\begin{lemma} \label{lemmatwoforgdimcomplexes} Let $R$ a Noetherian local ring, let $M$ be a finitely generated module with $\gdim_{R}M=0$ and $B_{M}^{R}<\infty$, and let $N\in D_{b}^{f}(R)$ be a bounded complex. If $q_{R}(M,N)<\infty$, then $\widehat{\tor}_{n}^{R}(M,N)=0$ for $n\in\mathbb{Z}$.
\end{lemma}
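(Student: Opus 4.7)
The plan is to adapt the proof of Lemma \ref{bmrstableextchristensenjorgensen} by replacing Ext with Tor throughout and bridging between the two via Ext--Tor duality (Theorem \ref{exttorduality}) applied to the Matlis dual. First, by Corollary \ref{pmrbmrimpliesgdim} the hypotheses $P_R(M,R)<\infty$ and $b_M^R<\infty$ yield $\gdim_R M<\infty$, and the MCM assumption then forces $\gdim_R M=0$. Choose a complete resolution $C=(\cdots\to C_1\to C_0\to C_{-1}\to\cdots)$ of $M$ with $\coker\partial_1^C=M$ and each $C_i$ finitely generated free, so that every syzygy $K_j=\coker\partial_{j+1}^C$ is MCM with $\gdim_R K_j=0$, $P_R(K_j,R)=0$, and $b_{K_j}^R=b_M^R<\infty$.

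The usual dimension-shifting long exact sequences for Tate Tor yield isomorphisms $\widehat{\tor}_n^R(M,N)\cong\widehat{\tor}_{n-j}^R(K_j,N)$ for all $n,j\in\mathbb{Z}$. After reducing $N$ to a single module by a truncation-and-induction argument on the amplitude of $N$, the target vanishing reduces to showing $\tor_k^R(K_j,N)=0$ for every $k\ge 1$ and every $j\in\mathbb{Z}$, where $\widehat{\tor}_k(K_j,-)$ agrees with $\tor_k(K_j,-)$ for $k\ge 1$ since $\gdim_R K_j=0$. To analyze this, translate to Ext of the Matlis dual: the identification $\Hom_R(C,N^\vee)\cong(C\otimes_R N)^\vee$ gives $\tor_k^R(K_j,N)^\vee\cong\ext_R^k(K_j,N^\vee)$ and, at the level of Tate cohomology, $\widehat{\tor}_n^R(M,N)^\vee\cong\widehat{\ext}_R^n(M,N^\vee)$. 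The hypothesis $q_R(M,N)<\infty$ propagates along $C$ to $q_R(K_j,N)<\infty$, equivalently $P_R(K_j,N^\vee)<\infty$, for each $j$; the target therefore becomes $P_R(K_j,N^\vee)\le 0$ for every $j$.

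The main obstacle is precisely this bound. If $N^\vee$ were finitely generated, Corollary \ref{bmrauslanderbuchsbaum} would immediately give $P_R(K_j,N^\vee)\le\depth R-\depth K_j=0$, using that $K_j$ is MCM with $\gdim_R K_j<\infty$ and $b_{K_j}^R<\infty$; but $N^\vee$ is a bounded complex of Artinian modules, so Corollary \ref{bmrauslanderbuchsbaum} does not apply directly. The hardest step will be pushing the Auslander-bound argument through this non--finitely-generated target. The cleanest route is to reduce to the complete case via Lemma \ref{lemmaone} and Proposition \ref{bmrgoestocompletion}, where Matlis duality becomes an exact anti-equivalence between bounded complexes of finitely generated and of Artinian $\hat R$-modules; combining this anti-equivalence with Ext--Tor duality one more time allows the required bound on $P_R(K_j,N^\vee)$ to be reformulated as a statement against a finitely generated target, to which Corollary \ref{bmrauslanderbuchsbaum} then applies and delivers the desired vanishing.
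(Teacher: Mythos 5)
Your reduction steps (passing to syzygies $K_j$ of a complete resolution, truncating $N$ down to a finitely generated module, and identifying $\widehat{\tor}_k$ with $\tor_k$ in positive degrees since $\gdim_R K_j=0$) match the structure of the paper's argument, but the final step has a genuine gap, and it is exactly the hard point. After Ext--Tor duality your goal $P_R(K_j,N^\vee)\leq 0$ is, by Theorem \ref{exttorduality} and faithfulness of Matlis duality, \emph{literally equivalent} to $q_R(K_j,N)\leq 0$, which is the statement you set out to prove; applying duality ``one more time'' only converts it back into a Tor statement against the finitely generated module $N$, and Corollary \ref{bmrauslanderbuchsbaum} says nothing about Tor. (Swapping arguments instead gives $\ext_R^k(K_j,N^\vee)\cong\ext_R^k(N,K_j^\vee)$, where now the source $N$ carries none of the hypotheses, so Corollary \ref{bmrauslanderbuchsbaum} again does not apply.) Completing via Lemma \ref{lemmaone} and Proposition \ref{bmrgoestocompletion} does not repair this: over a complete ring of positive dimension the Matlis dual of a finitely generated module is Artinian, not finitely generated, so $N^\vee$ never becomes an admissible test module for the Auslander bound. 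The underlying issue is that $b_M^R$ is defined by Ext into \emph{finitely generated} modules, and no amount of shuffling between Ext and Tor via $(-)^\vee$ manufactures control over Ext into Artinian modules or over Tor; one genuinely needs a Tor-side analogue of Corollary \ref{bmrauslanderbuchsbaum}.

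This is precisely what the paper supplies from outside: it keeps the argument entirely on the Tor side, replaces $N$ by the finitely generated module $C_0(F)$ coming from a degreewise finitely generated semiprojective resolution $F\to N$ (so that $\tor_r^R(C_n,N)\cong\tor_r^R(C_n,C_0(F))$ for $r>0$), and then invokes \cite[Proposition 7.5.5]{sanders} -- a rigidity statement asserting that, for a module of finite Gorenstein dimension satisfying the standing hypotheses, eventual vanishing of Tor against a finitely generated module forces $q_R(C_n,C_0(F))=0$ -- together with \cite[2.4.1]{christensenjorgensen2} to identify Tate and absolute Tor. Without that external input (or a proof of it), your argument does not close; if you want to avoid citing it, you would need a genuinely new idea, e.g.\ a reduction to the Artinian case where $N^\vee$ is finitely generated, but then you must justify descending $q_R(K_j,N)<\infty$ and the Tor vanishing modulo a regular sequence that need not be regular on $N$, which your proposal does not address.
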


\begin{proof}
We can assume $\sup{N}=0$. Let 
\[\dots\rightarrow T_{1}\xrightarrow{\partial_{1}} T_{0}\xrightarrow{\partial_{0}} T_{-1}\rightarrow\dots\]
be an exact complex of finitely generated free modules with $M=\coker{\partial_{1}}$ and let $F\rightarrow N$ be a semiprojective resolution with $\inf{F}=\inf{H(N)}$ and $F_{i}$ finitely generated for all $i$. Fix $n\in\mathbb{Z}$. Then $C_{n}=\coker{\partial_{n}}$ is so that $\gdim_{R}C_{n}=0$, $q_{R}(C_{n},N)<\infty$, and
\[\widehat{\tor}_{n}^{R}(M,N)\cong\widehat{\tor}_{1}^{R}(C_{n},N)\cong\tor_{1}^{R}(C_{n},N)\]
(the second isomorphism is by \cite[2.4.1]{christensenjorgensen2}). We also have $B_{C_{n}}^{R}<\infty$ by Lemma \ref{prelemmaforpmrbmrimpliesgdimforcomplexes}. Since
\[0\rightarrow F_{\leq -1}\rightarrow F\rightarrow F_{\geq 0}\rightarrow 0\]
is exact and since $F_{\geq 0}\simeq C_{0}(F)$, we have
\[\tor_{r}^{R}(C_{n},N)\cong \tor_{r}^{R}(C_{n},C_{0}(F))\]
for all $r>0$, and so $q_{R}(C_{n},C_{0}(F))<\infty$. Therefore $q_{R}(C_{n},C_{0}(F))=0$ by Lemma \ref{bmrbmwhenmismod} and \cite[Proposition 7.5.5]{sanders}, which implies $\widehat{\tor}_{n}^{R}(M,N)=0$.
\end{proof}

When the ring is AB, Christensen and Jorgensen proved the module version of the next result in \cite[Theorem 2.3 and Remark 3.3]{christensenjorgensen}.
\begin{theorem} \label{deriveddepthformulaforcomplexes} Let $R$ be a Noetherian local ring and let $M$ and $N$ be complexes in $D_{b}^{f}(R)$ with $\gdim_{R}M<\infty$ and $B_{M}^{R}<\infty$. If $q_{R}(M,N)<\infty$, then
\[\depth{M\otimes_{R}^{\mathbb{L}}N}=\depth{M}+\depth{N}-\depth{R}.\]
\end{theorem}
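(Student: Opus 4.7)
The plan is to use the truncation-of-semiprojective-resolution technique (as in the proofs of Theorems \ref{pmrbmrimpliesgdimforcomplexes} and \ref{pmrbmrpmmimpliesprojdimcomplexes}) to reduce to the module case, establish the vanishing of Tate Tor, and then invoke a derived depth formula.

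Let $P\to M$ be a semiprojective resolution with $\inf P = \inf H(M)$ and every $P_i$ finitely generated. Choose $n$ larger than $\sup H(M) + \dim R$ and set $C = C_n(P)$; by the Depth Lemma, $C$ is a finitely generated MCM module. Lemma \ref{bepsilonmforsyzygies} gives $P_R(C,R)<\infty$, Lemma \ref{lemmaforpmrbmrimpliesgdimforcomplexes} gives $b_C^R<\infty$, and Corollary \ref{pmrbmrimpliesgdim} then yields $\gdim_R C<\infty$. We may assume $N$ is a bounded complex since $N$ is quasiisomorphic to one. From the degreewise-split short exact sequence $0\to P_{\leq n-1}\to P \to P_{\geq n}\to 0$, together with $P_{\geq n}\simeq \Sigma^n C$, we obtain an exact triangle
$$P_{\leq n-1}\longrightarrow M\longrightarrow \Sigma^n C \longrightarrow \Sigma P_{\leq n-1}$$
in $D(R)$, where $P_{\leq n-1}$ is a bounded complex of finitely generated projectives, in particular of finite projective dimension. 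Applying $-\otimes_R^{\mathbb{L}} N$ and inspecting homology in high degrees, $q_R(M,N)<\infty$ forces $q_R(C,N)<\infty$.

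Lemma \ref{lemmatwoforgdimcomplexes} then gives $\widehat{\tor}_i^R(C,N)=0$ for all $i\in\mathbb{Z}$. Because $P_{\leq n-1}$ has finite projective dimension, its Tate Tor with $N$ vanishes in all degrees, so applying the Tate Tor functors to the triangle above yields $\widehat{\tor}_i^R(M,N) \cong \widehat{\tor}_i^R(\Sigma^n C,N) \cong \widehat{\tor}_{i-n}^R(C,N) = 0$ for every $i\in\mathbb{Z}$. The derived depth formula for complexes with vanishing Tate Tor (the Tor-analog of \cite[Proposition 6.5]{christensenjorgensen}, which is what we used for Tate Ext in the proof of Theorem \ref{firstauslanderbuchsbaumformula}) then gives $\depth(M\otimes_R^{\mathbb{L}} N) = \depth M + \depth N - \depth R$. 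The main obstacle will be justifying the vanishing of $\widehat{\tor}^R_*(P_{\leq n-1},N)$ and the shift identity $\widehat{\tor}_i^R(\Sigma^n C,N) \cong \widehat{\tor}_{i-n}^R(C,N)$ in the derived-category setup for complexes, together with confirming that the depth formula genuinely follows from Tate Tor vanishing for complexes of finite Gorenstein dimension; the remainder is a direct adaptation of reductions already used in the paper.
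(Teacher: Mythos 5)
Your proposal is correct and follows essentially the same route as the paper: truncate a finite semiprojective resolution to an MCM cokernel $C$ with $P_{R}(C,R)<\infty$ and $b_{C}^{R}<\infty$ (the paper gets MCM by arranging $\gdim_{R}C=0$ via Corollary \ref{pmrbmrimpliesgdim} rather than by the Depth Lemma, a cosmetic difference), transfer $q_{R}(M,N)<\infty$ to $q_{R}(C,N)<\infty$ through the degreewise-split truncation sequence, apply Lemma \ref{lemmatwoforgdimcomplexes}, and conclude via vanishing of Tate homology. The two steps you flag as needing justification are exactly the cited results in the paper: the shift $\widehat{\tor}_{i}^{R}(M,N)\cong\widehat{\tor}_{i-n}^{R}(C,N)$ is \cite[Lemma 2.10]{christensenjorgensen2}, and the depth formula from vanishing Tate Tor is \cite[Theorem 2.3]{christensenjorgensen}, so no genuine gap remains.
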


\begin{proof}
Let $P\rightarrow M$ be a semiprojective resolution with $\inf{P}=\inf{H(M)}$ and $P_{i}$ finitely generated for all $i$. Let $L=C_{i}(P)$ where $i>\max\{\sup{H(M)},P_{R}(M,R)\}$. Then $\gdim_{R}L=0$. Also, we can assume $N$ is bounded. Now \cite[Lemma 2.10]{christensenjorgensen2} gives the first equality below and Lemma \ref{lemmatwoforgdimcomplexes} gives the second equality below for all $n\in\mathbb{Z}$
\[\widehat{\tor}_{n}^{R}(M,N)=\widehat{\tor}_{n-i}^{R}(L,N)=0.\]
Applying \cite[Theorem 2.3]{christensenjorgensen} finishes the proof.
\end{proof}

%\nocite{*} 
%this can be used to check if a reference has been used, need refcheck package

\small

\end{document}